%
\RequirePackage{etoolbox}
\csdef{input@path}{{style/}{graphics/}}
\makeatletter
\input{arxiv-vmsta.cfg}
\makeatother
\documentclass[numbers,compress]{vmsta}

\volume{1}
\pubyear{2014}
\firstpage{151}
\lastpage{165}
\doi{10.15559/15-VMSTA16}

\startlocaldefs

\urlstyle{rm}
\allowdisplaybreaks

\newcommand{\rone}{\mathbb{R}}
\renewcommand{\Re}{\rone}

\DeclareMathOperator{\Ent}{\mathbf{Ent}}
\newcommand{\FF}{\mathbb{F}}
\newcommand{\Ff}{\mathcal{F}}
\newcommand{\ba}{\begin{aligned}}
\newcommand{\ea}{\end{aligned}}
\newcommand{\be}{\begin{equation}}
\newcommand{\ee}{\end{equation}}
\newcommand{\la}{\langle}
\newcommand{\ra}{\rangle}
\newcommand{\E}{\mathbf{E}}
\newcommand{\Nf}{\mathcal{N}}
\renewcommand{\P}{\mathbf{P}}

\newtheorem{thm}{Theorem}
\newtheorem{lem}{Lemma}
\newtheorem{prop}{Proposition}
\theoremstyle{definition}
\newtheorem{ex}{Example}

\endlocaldefs

\begin{document}

\begin{frontmatter}

\title{A martingale bound for the entropy associated with~a~trimmed
filtration on $\Re^d$}
\author[a]{\inits{A.}\fnm{Alexei}\snm{Kulik}\corref{cor1}}\email{kulik.alex.m@gmail.com}
\cortext[cor1]{Corresponding author.}

\author[b]{\inits{T.}\fnm{Taras}\snm{Tymoshkevych}}\email{tymoshkevych@gmail.com}

\address[a]{Institute of Mathematics, National Academy of Science,
Kiev, Ukraine}
\address[b]{Kyiv National Taras Shevchenko University, Ukraine}

\markboth{A. Kulik, T. Tymoshkevych}{A martingale bound for the entropy
associated with a trimmed filtration on $\Re^d$}

\begin{abstract}
Using martingale methods, we provide bounds for the entropy of a
probability measure on $\Re^d$ with the right-hand side given in a
certain integral form. As a~corollary, in the one-dimensional case, we
obtain a weighted log-Sobolev inequality.
\end{abstract}

\begin{keyword}
Martingale \sep entropy \sep log-Sobolev inequality
\sep trimmed regions \sep trimmed filtration
\MSC[2010]39B62 \sep47D07 \sep60E15 \sep60J60
\end{keyword}
\accepted{19 January 2015}
\revised{18 January 2015}
\received{22 December 2014}
\publishedonline{2 February 2015}
\end{frontmatter}

\section{Introduction}

A probability measure $\mu$ on $\Re^d$ is said to satisfy the \emph
{log-Sobolev inequality} if for every smooth compactly supported
function $f:\Re^d\to\Re$, the \emph{entropy} of~$f^2$, which by
definition equals
\[
\Ent_\mu f^2=\int_{\Re^d}f^2
\log f^2\, d\mu- \biggl(\int_{\Re^d}f^2\, d
\mu \biggr)\log \biggl(\int_{\Re^d}f^2\, d\mu
\biggr),
\]
possesses a bound
\be\label{LSI}
\Ent_\mu f^2 \leq2c \int_{\Re^d}\|\nabla f\|^2\, d\mu
\ee
with some constant $c$. The least possible constant $c$ such that (\ref
{LSI}) holds for every compactly supported smooth $f$ is called the
log-Sobolev constant for the measure $\mu$; the multiplier 2 in (\ref
{LSI}) is chosen in such a way that for the standard Gaussian measure
on $\Re^d$, its log-Sobolev constant equals 1.

The \emph{weighted log-Sobolev inequality} has the form
\be\label{WLSI}
\Ent_\mu f^2 \leq 2\int_{\Re^d}\|W \nabla f\|^2\, d\mu,
\ee
where the function $W$, taking values in $\Re^{d\times d}$, has the
meaning of a \emph{weight}. Clearly, one can consider (\ref{LSI}) as a
particular case of (\ref{WLSI}) with constant weight~$W$ equal to $\sqrt
{c}$ multiplied by the identity matrix. The problem of giving explicit
conditions on $\mu$ that ensure the log-Sobolev inequality or its
modifications is intensively studied in the literature, in particular,
because of numerous connections between these inequalities with measure
concentration, semigroup properties, and so on (see, e.g., \cite
{Ledoux}). Motivated by this general problem, in this paper, we propose
an approach that is based mainly on martingale methods and provides
explicit bounds for the entropy with the right-hand side given in a
certain integral form.

Our approach is motivated by the well-known fact that, on a path space
of a Brownian motion, the log-Sobolev inequality possesses a simple
proof based on fine martingale properties of the space (cf.\ \cite
{Cap_Hsu_ledoux,Gong_Ma}). We observe that a part of this proof
is, to a high extent, insensitive w.r.t.\ the structure of the
probability space; we formulate a respective martingale bound for the
entropy in Section~\ref{s2}. To apply this general bound on a
probability space of the form $(\Re^d, \mu)$, one needs a proper
martingale structure therein. In Section \ref{s3}, we introduce such a
structure in terms of a \emph{trimming filtration}, defined in terms of
a set of \emph{trimmed regions} in $\Re^d$. This leads to an integral
bound for the entropy on $(\Re^d, \mu)$. In Section \ref{s4}, we show
the way how this bound can be used to obtain a weighted log-Sobolev
inequality; this is made in the one-dimensional case $d=1$, although we
expect that similar arguments should be effective for the
multidimensional case as well; this is a subject of our further research.

\subsection{A martingale bound for the entropy}\label{s2}

Let $(\varOmega, \Ff, \P)$ be a probability space with filtration $\FF=\{
\Ff_t, t\in[0,1]\}$, which is right-continuous and complete, that is,
every $\Ff_t$ contains all $\P$-null sets from~$\Ff$. Let $\{M_t,t\in
[0,1]\}$ be a nonnegative square-integrable martingale w.r.t. $\FF$ on
this space, with c\`adl\`ag trajectories. We will use the following
standard facts and notation (see \cite{Ell}).

The martingale $M$ has unique decomposition $M=M^c+M^d$, where $M^c$ is
a continuous martingale, and $M^d$ is a purely discontinuous martingale
(see \cite{Ell}, Definition~9.20). Denote by $\la M^c \ra$ the \emph
{quadratic variation} of $M^c$, by
\[
[M]_t=\bigl\langle M^{c}\bigr\rangle_{t} +
\sum_{s \leq t} (M_s-M_{s-})^{2}
\]
the \emph{optional quadratic variation} of $M$, and by $\la M\ra$ the
\emph{predictable quadratic variation} of $M$, that is, the projection
of $[M]$ on the set of $\FF$-predictable processes. Alternatively, $\la
M\ra$ is identified as the $\FF$-predictable process that appears in
the Doob--Meyer decomposition for $M^2$, that is, the $\FF$-predictable
nondecreasing process $A$ such that $A_0=0$ and $M^2-A$ is a martingale.

For a nonnegative r.v. $\xi$, define its entropy by $\Ent\xi=\E\xi\log
\xi-\E\xi\log(\E\xi)$ with the convention $0\log0=0$.

\begin{thm}\label{t1} Let the $\sigma$-algebra $\Ff_0$ be degenerate.
Then for any nonnegative square-integrable martingale $\{M_t,t\in
[0,1]\}$ with c\`adl\`ag trajectories,
\[
\Ent M_1\leq\E\int_0^1
{1\over M_{t-}}d \langle M\rangle_{t}.
\]
\end{thm}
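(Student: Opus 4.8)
The plan is to track the convex function $\phi(x)=x\log x$ along the martingale and to read the bound off from It\^o's formula. Since $\Ff_0$ is degenerate and $M$ is a martingale, $\E M_1=M_0$ is a constant, whence $\phi(M_0)=(\E M_1)\log(\E M_1)$ and therefore $\Ent M_1=\E\phi(M_1)-\phi(M_0)$; it thus suffices to control the increment $\E\phi(M_1)-\phi(M_0)$. With $\phi'(x)=\log x+1$ and $\phi''(x)=1/x$, the It\^o formula for c\`adl\`ag semimartingales gives
\[
\phi(M_t)-\phi(M_0)=\int_0^t\phi'(M_{s-})\,dM_s+\frac12\int_0^t\frac{1}{M_{s-}}\,d\la M^c\ra_s+\sum_{s\le t}\bigl[\phi(M_s)-\phi(M_{s-})-\phi'(M_{s-})\Delta M_s\bigr],
\]
and the strategy is to take expectations, discard the stochastic integral, and bound the two surviving terms by $\E\int_0^1 M_{s-}^{-1}\,d\la M\ra_s$.

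Before doing so I would remove the singularities of $\phi$ and $\phi''$ at the origin by a double approximation. First, replacing $M$ by $M^\varepsilon:=M+\varepsilon$ (still a martingale, with $\la M^\varepsilon\ra=\la M\ra$, the same jumps and continuous part, and $M^\varepsilon_{s-}\ge\varepsilon$) makes the weight $1/M^\varepsilon_{s-}\le1/\varepsilon$ bounded. Second, I would localize by $\sigma_n=\inf\{t:M_t\ge n\}\wedge1$; for $s\le\sigma_n$ the left limit $M^\varepsilon_{s-}$ stays bounded, so $\phi'(M^\varepsilon_{s-})$ is bounded and, $M$ being square-integrable, the stopped stochastic integral $\int_0^{\cdot\wedge\sigma_n}\phi'(M^\varepsilon_{s-})\,dM_s$ is a genuine martingale whose expectation vanishes.

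With the stochastic integral gone, the continuous part contributes $\frac12\E\int_0^{\sigma_n\wedge1}\frac{1}{M^\varepsilon_{s-}}\,d\la M^c\ra_s$, which is at most $\E\int_0^{\sigma_n\wedge1}\frac{1}{M^\varepsilon_{s-}}\,d\la M^c\ra_s$ since the integrand is nonnegative (this is where the factor $2$ in the definition of the log-Sobolev constant is absorbed). For the jump part I would use the elementary inequality
\[
\phi(b)-\phi(a)-\phi'(a)(b-a)=b\log(b/a)-(b-a)\le\frac{(b-a)^2}{a},\qquad a,b>0,
\]
which reduces, after dividing by $a$ and setting $t=b/a$, to $t\log t-t+1\le(t-1)^2$; the latter holds for all $t>0$, as the difference $g(t)=(t-1)^2-t\log t+t-1$ satisfies $g(1)=g'(1)=0$, while $g''(t)=2-1/t$, and an elementary sign analysis confirms $g\ge0$. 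Applying this with $a=M^\varepsilon_{s-}$ and $b=M^\varepsilon_s$ bounds each jump term by $(\Delta M_s)^2/M^\varepsilon_{s-}$.

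Collecting the two contributions and using that the jumps of $[M]$ are exactly $(\Delta M_s)^2$ while $d[M]^c=d\la M^c\ra$, I obtain
\[
\E\phi\bigl(M^\varepsilon_{\sigma_n\wedge1}\bigr)-\phi(M^\varepsilon_0)\le\E\int_0^{\sigma_n\wedge1}\frac{1}{M^\varepsilon_{s-}}\,d[M]_s=\E\int_0^{\sigma_n\wedge1}\frac{1}{M^\varepsilon_{s-}}\,d\la M\ra_s\le\E\int_0^1\frac{1}{M^\varepsilon_{s-}}\,d\la M\ra_s,
\]
where the middle equality holds because $1/M^\varepsilon_{s-}$ is a bounded predictable integrand and $\la M\ra$ is the dual predictable projection of $[M]$. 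Letting $n\to\infty$ (with Fatou's lemma, $\phi$ being bounded below by $-1/e$) and then $\varepsilon\downarrow0$ (monotone convergence on the right, and $\liminf_{\varepsilon}\Ent M^\varepsilon_1\ge\Ent M_1$ by lower semicontinuity of entropy on the left) yields the claim. The main obstacle I anticipate is exactly these limiting and identification arguments: verifying that the stopped stochastic integral is a true martingale, justifying the interchange $\E\int H\,d[M]=\E\int H\,d\la M\ra$, and handling the possibility that $\E\phi(M_1)=+\infty$, in which case one must check that the right-hand side is infinite as well so that the inequality remains valid through the two successive limits.
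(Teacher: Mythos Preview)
Your proposal is correct and follows essentially the same route as the paper: apply It\^o's formula to $x\log x$, bound the jump remainder by $(\Delta M_s)^2/M_{s-}$, pass from $[M]$ to $\langle M\rangle$ via predictability, and remove the two-sided truncation by a stopping-time argument for the upper bound and an $\varepsilon$-shift for the lower bound. The only cosmetic differences are the order in which the two approximations are removed and that the paper uses uniform integrability of $\{M_{\tau_N}\log M_{\tau_N}\}$ (from $\E M_{\tau_N}^2\le\E M_1^2$ and $x\log x=o(x^2)$) where you invoke Fatou/lower semicontinuity; both variants close the argument.
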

\begin{proof} Consider first the case where
\be\label{ass}
c_1\leq M_t\leq c_2, \quad t\in[0,1],
\ee
with some positive constants $c_1, c_2$. Consider a smooth function
$\varPhi$, bounded with all its derivatives, such that
\[
\varPhi(x)=x\log x, \quad x\in[c_1, c_2].
\]
Then by the It\^o formula (see \cite{Ell}, Theorem 12.19),
\begin{align*}
\varPhi(M_1)-\varPhi(M_0)&=\int
_0^1\varPhi'(M_{t-})\, dM_t+
{1\over2}\int_0^1\varPhi
''(M_{t-})\, d \bigl\langle M^c
\bigr\rangle_{t}\\
&\quad+\sum_{0< t\leq1} \bigl[\varPhi(M_{t})-
\varPhi(M_{t-})-\varPhi '(M_{t-})
(M_t-M_{t-}) \bigr].
\end{align*}
Clearly,
\[
\E\int_0^1\varPhi'(M_{t-})\, dM_t=0.
\]
Because $\Ff_0$ is assumed to be degenerate, $M_0=\E[M_1|\Ff_0]=\E M_1$
a.s., and hence
\begin{align*}
\Ent M_1&=\E \bigl(\varPhi(M_1)-
\varPhi(M_0) \bigr)\\
&={1\over2}\E\int
_0^1\varPhi ''(M_{t-})\, d
\bigl\langle M^c\bigr\rangle_{t}\\
&\quad+\E\sum_{0< t\leq1} \bigl[\varPhi(M_{t})-
\varPhi(M_{t-})-\varPhi '(M_{t-})
(M_t-M_{t-}) \bigr].
\end{align*}
For $x\in[c_1, c_2]$, we have $\varPhi'(x)=1+\log x$ and $\varPhi''(x)=1/x$.
Observe that
for any $x,\delta$ such that $x,x+\delta\in[c_1, c_2]$,
\[
\ba \varPhi(x+\delta)-\varPhi(x)-\varPhi'(x) \delta&=(x+\delta)
\log(x+\delta)-x\log x-\delta(1+\log x)
\\
&=(x+\delta)\log \biggl(1+{\delta\over x} \biggr)-\delta\leq(x+\delta )
{\delta\over x}-\delta=\frac{\delta^{2}}{x}. \ea
\]
Then
\[
\ba \Ent M_1&\leq{1\over2}\E\int_0^1
{1\over M_{t-}}d \bigl\langle M^c\bigr\rangle _{t}+
\E\sum_{0< t\leq1}{(M_t-M_{t-})^2\over M_{t-}} \leq\E\int
_0^1{1\over M_{t-}}d [M]_t.
\ea
\]
Because the process $M_{t-},\, t\in[0,1]$, is $\FF$-predictable, we have
\[
\E\int_0^1{1\over M_{t-}}d
[M]_t=\E\int_0^1
{1\over M_{t-}}d \la M\ra_t,
\]
which completes the proof of the required bound under assumption (\ref{ass}).

The upper bound in this assumption can be removed using the following
standard localization procedure. For $N\geq1$, define
\[
\tau_N=\inf\bigl\{t\in[0,1]: M_t\geq N\bigr\}
\]
with the convention $\inf\varnothing=1$. Then, repeating the above
argument, we get
\[
\Ent M_{\tau_N}\leq\E\int_0^{\tau_N}
{1\over M_{t-}}d \la M\ra_t\leq\E \int_0^1
{1\over M_{t-}}d \la M\ra_t.
\]
We have $M_{\tau_N}\to M_1,N\to\infty$ a.s. On the other hand, $\E
M_{\tau_N}^2\leq\E M_{1}^2$, and
\[
x\log x=o\bigl(x^2\bigr),\quad x\to+\infty.
\]
Hence, the family $\{M_{\tau_N}\log M_{\tau_N}, N\geq1\}$ is uniformly
integrable, and
\[
\Ent M_{\tau_N}\to\Ent M_{1}, \quad N\to\infty.
\]
Passing to the limit as $N\to\infty$, we obtain the required statement
under the assumption $M_t\geq c_1>0$. Taking $M_t+(1/n)$ instead of
$M_t$ and then passing to the limit as $n\to\infty$, we complete the
proof of the theorem.
\end{proof}

We further give two examples where the shown martingale bound for the
entropy is applied. In these examples, it would be more convenient to
assume that $t$ varies in $[0,\infty)$ instead of $[0,1]$; a respective
version of Theorem \ref{t1} can be proved by literally the same argument.

\begin{ex}[Log-Sobolev inequality on a Brownian path space; \cite
{Cap_Hsu_ledoux,Gong_Ma}]\label{ex1} Let $B_t,\, t\geq0$, be a
Wiener process on $(\varOmega, \Ff, \P)$ such that $\Ff=\sigma(B)$. Let $\{
\Ff_t\}$ be the natural filtration for $B$. Then for every $\zeta\in
L_2(\varOmega, \P)$, the following martingale representation is available:
\be\label{clark}
\zeta=\E\zeta+\int_0^\infty\eta_s\, dB_s
\ee
with the It\^o integral of a (unique) square-integrable $\{\Ff_t\}
$-adapted process $\{\eta_t\}$ in the right-hand side (cf. \cite{Clark}).
Take $\xi\in L_4(\varOmega, \P)$ and put $\zeta=\xi^2$ and
\[
M_t=\E[\zeta|\Ff_t]=\E\zeta+\int_0^t
\eta_s\, dB_s,\quad t\geq0.
\]
Then the calculation from the proof of Theorem \ref{t1} gives the bound
\[
\Ent\xi^2\leq{1\over2}\E\int_0^1
{1\over M_{t-}}d \bigl\langle M^c\bigr\rangle _{t}=
{1\over2}\E\int_0^1
{\eta^2_t\over M_t} dt={1\over2}\E\int_0^1
{\eta^2_t\over\E[\xi^2|\Ff_t]} dt.
\]
Note the extra term $1/2$, which appears because the martingale $M$ is
continuous.\vadjust{\eject}

Next, recall the Ocone representation \cite{Ocone_Clark} for the
process $\{\eta_t\}$, which is valid if $\zeta$ possesses the Malliavin
derivative $D\zeta=\{D_t\zeta, t\geq0\}$\emph{:}
\be\label{Ocone}
\eta_t=\E[D_t\zeta|\Ff_t], \quad t\geq0.
\ee
We omit the details concerning the Malliavin calculus, referring the
reader, if necessary, to \cite{nualart}. Because the Malliavin
derivative possesses the chain rule, we have
\[
\eta^2_t=4 \bigl(\E[\xi D_t\xi|
\Ff_t] \bigr)^2\leq4 \E\bigl[\xi^2\bigl|
\Ff_t\bigr] \E \bigl[(D_t\xi)^2\bigr|
\Ff_t\bigr],
\]
and consequently the following log-Sobolev-type inequality holds\emph{:}
\be\label{LSI_B}
\Ent\xi^2\leq2\E\int_0^1 \E\bigl[(D_t\xi)^2\big|\Ff_t\bigr]\, dt=2\E\|D\xi\|^2_H,
\ee
where $D\xi$ is considered as a random element in $H=L_2(0, \infty)$.
By a proper approximation procedure one can show that (\ref{LSI_B})
holds for every $\xi\in L_2(\varOmega, \P)$ that has a Malliavin
derivative $D\xi\in L_2(\varOmega, \P, H)$.
\end{ex}

The previous example is classic and well known. The next one apparently
is new, which is a bit surprising because the main ingredients therein
(the Malliavin calculus on the Poisson space and the respective
analogue of the Clark--Ocone representation (\ref{clark}), (\ref
{Ocone})) are well known (cf. \cite{Carlen_Pardoux,TsoiEl}).

\begin{ex}[Log-Sobolev inequality on the Poisson path space]\label{ex2}
Let $N_t$, $t\geq0$, be a Poisson process with intensity $\lambda$,
and $\Ff=\sigma(N)$. Denote by $\tau_k,\, k\geq1$, the moments of
consequent jumps of the process $N$, and by $\Ff_t=\sigma(N_s, s\leq
t)$, $t\geq0$, the natural filtration for $N$. For any variable of the form
\[
\xi=F(\tau_1, \dots, \tau_n)
\]
with some $n\geq1$ and some compactly supported $F\in C^1(\Re^n)$,
define the random element $D\xi$ in $H=L_2(0, \infty)$ by
\[
D\xi=-\sum_{k=1}^nF'_k(
\tau_1, \dots, \tau_n)1_{[0,\tau_k]}.
\]
Denote by the same symbol $D$ the closure of $D$, considered as an
unbounded operator $L_2(\varOmega, \P)\to L_2(\varOmega, \P, H)$. Then the
following analogue of the Clark--Ocone representation (\ref{clark}), (\ref{Ocone})
is available (\cite{TsoiEl}):
for every $\zeta$ that possesses the stochastic derivative $D\zeta$,
the following martingale representation holds\emph{:}
\[
\zeta=\E\zeta+{1\over\lambda}\int_0^\infty
\eta_{s}\, d\tilde N_s,
\]
where $\tilde N_t=N_t-\lambda t$ denotes the compensated Poisson
process corresponding to~$N$, and $\{\eta_t\}$ is the projection in
$L_2(\varOmega, \P, H)$ of $D\xi$ on the subspace generated by the $\{\Ff
_t\}$-predictable processes.

Proceeding in the same way as we did in the previous example, we obtain
the following
log-Sobolev-type inequality on the Poisson path space\emph{:}
\be\label{LSI_B}
\Ent\xi^2\leq{4\over\lambda^2}\E\|D\xi\|^2_H.
\ee
\end{ex}

\section{Trimmed regions on $\Re^d$ and associated integral bounds for
the entropy}\label{s3} Let $\mu$ be a probability measure on $\Re^d$
with Borel $\sigma$-algebra $\mathcal{B}(\Re^d)$. Our further aim is to
apply the general martingale bound from Theorem \ref{t1} in the
particular setting $
(\varOmega, \Ff, \P)=(\Re^d, \mathcal{B}(\Re^d), \mu)$. To this end, we
first \emph{construct} a filtration $\{\Ff_t, t\in[0,1]\}$.

In what follows, we denote $\Nf_\mu=\{A\in\Ff: \mu(A)=0\}$ (the class
of $\mu$-null Borel sets).

Fix the family $\{D_t, t\in[0,1]\}$ of closed subsets of $\Re^d$ such that:
\begin{itemize}
\item[(i)] $D_s\subset D_t,\, s\leq t$;
\item[(ii)] $D_0\in\Nf_\mu$, $\mu(D_t)<1$ for $t<1$, and $D_1=\Re^d$;
\item[(iii)] for every $t>0$,
\[
D_t\setminus \biggl(\bigcup_{s<t}D_s
\biggr)\in\Nf_\mu,
\]
and for every $t<1$,
\[
D_t =\bigcap_{s>t}D_s.
\]
\end{itemize}
We call the sets $D_t,\, t\in[0,1]$, \emph{trimmed regions}, following
the terminology used frequently in the multivariate analysis (cf. \cite
{KM_AnnSta}). Given the family $\{D_t\}$, we define the respective \emph
{trimmed filtration} $\{\Ff_t\}$ by the following convention. Denote
$Q_t=\Re^d\setminus D_t$. Then, by definition, a set $A\in\Ff$ belongs
to $\Ff_t$ if either $A\cap Q_t\in\Nf_\mu$ or $Q_t\setminus A\in\Nf_\mu$.

By the construction, $\FF=\{\Ff_t\}$ is complete. It is also clear
that, by property~(ii) of the family $\{D_t\}$, the $\sigma$-algebra
$\Ff_0$ is degenerate and, by property~(iii), the filtration $\FF$ is
continuous. Hence, we can apply Theorem \ref{t1}.

Fix a Borel-measurable function $g:\Re^d\to\Re^+$ that is
square-integrable w.r.t.~$\mu$. Consider it as a random variable on $
(\varOmega, \Ff, \P)=(\Re^d, \mathcal{B}(\Re^d), \mu)$ and define
\[
g_t=\E[g|\Ff_t], \quad t\in[0,1].
\]
Since the $\sigma$-algebra possesses an explicit description, we can
calculate every $g_t$ directly; namely, for $t>0$ and $\mu$-a.a.\ $x$,
we have
\be\label{g_t}
g_t(x)= \bigg\{
\begin{array}{@{}ll}
g(x), & x\in D_t, \\[2pt]
G_t, & x\in Q_t,
\end{array}
\ee
where we denote
\be\label{G_t}
G_t={1\over\mu(Q_t)}\int_{Q_t}g(y)\,\mu(dy).
\ee
Note that $\mu(Q_t)>0$ for $t<1$ and the function $G:[0,1)\to\Re^+$ is
continuous. In what follows, we consider the modification of the
process $\{g_t\}$ defined by (\ref{g_t}) for \emph{every} $x\in\Re^d$.
Its trajectories can be described as follows. Denote
\begin{align}\label{tau}
\tau(x)=\inf\{t:x\in D_t\};
\end{align}
then by property (iii) of the family $\{D_t\}$ we have $\tau(x)=\min\{
t:x\in D_t\}$, and by property (ii) we have $\tau(x)<1, x\in\Re^d, \tau
(x)=0\Leftrightarrow x\in D_0$. Then, for a fixed $x\in\Re^d$, we have
\[
g_t(x)=g(x)1_{t\geq\tau(x)}+G_t1_{t<\tau(x)}, \quad t
\in[0,1],
\]
which is a c\`adl\`ag function because $\{G_t\}$ is continuous on $[0,1)$.

\begin{thm}\label{t2} Let $g$ be a Borel-measurable function $g:\Re
^d\to\Re^+$, square-integrable w.r.t.\ $\mu$. Let $\{D_t\}$ be a
family of trimmed regions that satisfy \hbox{\emph{(i)--(iii)}}.

Then
\[
\Ent_\mu g\leq\int_{\Re^d}{(g(x)-G_{\tau(x)})^2\over G_{\tau(x)}}\mu(dx),
\]
where the functions $G$ and $\tau$ are defined by \eqref{G_t} and \eqref
{tau}, respectively.
\end{thm}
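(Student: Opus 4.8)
The plan is to apply Theorem~\ref{t1} to the martingale $M_t = g_t$ defined by the trimmed filtration $\{\Ff_t\}$, and then to make the abstract integral bound $\E\int_0^1 M_{t-}^{-1}\,d\langle M\rangle_t$ fully explicit using the concrete description of the trajectories of $\{g_t\}$. Since the excerpt has already verified that $\Ff_0$ is degenerate, that $\FF$ is continuous, and that $\{g_t\}$ has c\`adl\`ag trajectories, the hypotheses of Theorem~\ref{t1} are met once we note that $g$ is square-integrable, so $M_1 = g_1 = g$ and $\Ent M_1 = \Ent_\mu g$. The task therefore reduces to computing the predictable quadratic variation $\langle M\rangle$ for this specific martingale.

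The key observation is structural: along a fixed trajectory $g_t(x) = g(x)1_{t\geq\tau(x)} + G_t 1_{t<\tau(x)}$, the process moves continuously (following the deterministic continuous function $G_t$) up until the random time $\tau(x)$, at which it jumps from $G_{\tau(x)-}=G_{\tau(x)}$ to the final value $g(x)$. Because $\FF$ is a continuous filtration, the compensated-jump structure must be absorbed so that $\langle M\rangle$ is continuous, and the single effective contribution to the bound comes from this jump. First I would argue that the continuous martingale part contributes negligibly — indeed $G_t$ is a deterministic function, so the genuinely random variation of $g_t$ enters only through the jump at $\tau(x)$; I expect the continuous part $\langle M^c\rangle$ to vanish or to integrate to something dominated by the jump term. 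Then, tracking the jump contribution through the It\^o/quadratic-variation calculation exactly as in the proof of Theorem~\ref{t1}, each trajectory contributes $(M_{\tau}-M_{\tau-})^2/M_{\tau-} = (g(x)-G_{\tau(x)})^2/G_{\tau(x)}$ to the optional quadratic variation integral.

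Concretely, I would return to the inequality established inside the proof of Theorem~\ref{t1}, namely $\Ent M_1 \leq \E\int_0^1 M_{t-}^{-1}\,d[M]_t$, and evaluate the right-hand side directly rather than invoking the predictable version. Using $[M]_t = \langle M^c\rangle_t + \sum_{s\le t}(M_s-M_{s-})^2$, for the trajectory indexed by $x$ the jump sum reduces to the single term $(g(x)-G_{\tau(x)})^2$ occurring at $t=\tau(x)$, where $M_{t-}=G_{\tau(x)}$. Integrating against $\mu(dx)$ over $\Re^d$ then yields exactly $\int_{\Re^d}(g(x)-G_{\tau(x)})^2 / G_{\tau(x)}\,\mu(dx)$, which is the asserted bound. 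The continuous-part integral $\frac12\E\int_0^1 M_{t-}^{-1}\,d\langle M^c\rangle_t$ I would show to be zero, since the only source of variation in $g_t$ before the jump is the deterministic drift $G_t$, which carries no martingale quadratic variation.

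The main obstacle I anticipate is making rigorous the claim that the continuous part contributes nothing and that the jump term is captured correctly by the optional quadratic variation. This requires care because $\tau(x)$ is genuinely random across $\Omega=\Re^d$, so from the viewpoint of the filtration the ``deterministic'' motion $G_t$ before the jump is really the conditional expectation $G_t=\E[g\mid \Ff_t]$ on $Q_t$, and I must verify that the martingale increments of $g_t$ on the pre-jump region integrate to zero against $\mu$ — that is, that the fluctuations of $G_t$ are a drift, not a martingale fluctuation, once one conditions properly. Establishing that $\langle M^c\rangle \equiv 0$ (equivalently, that $M=g_t$ is a pure-jump martingale relative to $\FF$ despite the continuous appearance of its paths) is the delicate point; I would address it by the explicit description \eqref{g_t}--\eqref{G_t} together with property~(iii), which guarantees continuity of $\FF$ and hence that no part of the martingale's quadratic variation can be carried by a continuous component built from the single-valued deterministic function $G_t$.
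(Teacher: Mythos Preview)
Your overall plan---apply Theorem~\ref{t1} and then make the bound explicit---is exactly what the paper does, but the way you propose to carry out the explicit computation is genuinely different from the paper's, and your version has a gap at precisely the point you flagged as ``the delicate point''.

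You want to work with the optional bracket $[M]$, split it as $\langle M^c\rangle + \sum(\Delta M_s)^2$, observe that the single jump at $\tau(x)$ contributes $(g(x)-G_{\tau(x)})^2/G_{\tau(x)}$, and then argue that $\langle M^c\rangle\equiv 0$. That last claim is in fact true, but your justification (``$G_t$ is a deterministic drift, so it carries no martingale quadratic variation'') is not a proof. What actually makes it work is that $G_t=I_t/\mu_t$ is a ratio of two continuous monotone functions and hence has \emph{finite variation} on every $[0,T]\subset[0,1)$; consequently the path $t\mapsto g_t(x)$ is of finite variation plus a single jump, so its pathwise quadratic variation is exactly $(g(x)-G_{\tau(x)})^2\,1_{t\ge\tau(x)}$, and $\langle M^c\rangle$ must vanish. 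Without this observation your argument does not close; ``deterministic'' alone is not enough (a deterministic time-change of Brownian motion is also ``deterministic'' in this loose sense).

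The paper sidesteps this issue entirely. Rather than decomposing $[M]$, it computes the \emph{predictable} bracket $\langle g\rangle$ directly from the Doob--Meyer characterization: for $\Ff_s$-measurable $\alpha$, one has $\E[\alpha(\langle g\rangle_t-\langle g\rangle_s)]=\E[\alpha(g_t^2-g_s^2)]$, and the right-hand side is computed explicitly from formula~\eqref{g_t} (this is Lemma~\ref{l1}). A dyadic-partition approximation then turns this into the identity $\E\int_0^1 g_{t-}^{-1}\,d\langle g\rangle_t=\int(g-G_\tau)^2/G_\tau\,d\mu$. This route never needs to know whether $M$ has a continuous martingale part; it trades that structural question for a concrete conditional-expectation calculation. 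Your route is shorter and more conceptual once the finite-variation point is supplied, while the paper's route is more self-contained and requires no appeal to the structure of $[M]$.
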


\begin{proof} We have already verified the assumptions of Theorem \ref
{t1}: the filtration $\{\Ff_t\}$ is complete and right continuous, and
the square-integrable martingale $\{g_t\}$ has c\`adl\`ag trajectories.
Because $g_1=g$ a.s. and $\Ff_0$ is degenerate, by Theorem \ref{t1} we have
the bound
\[
\Ent_\mu g\leq \E\int_0^1
{1\over g_{t-}}d \langle g\rangle_{t}.
\]
Hence, we only have to specify the integral in the right-hand side of
this bound. Namely, our aim is to prove that
\be\label{id}
\E\int_0^1{1\over g_{t-}}d \langle g\rangle_{t}=\int_{\Re
^d}{(g(x)-G_{\tau(x)})^2\over G_{\tau(x)}}\mu(dx).
\ee
First, we observe the following.

\begin{lem}\label{l1} Let $0<s<t< 1$, and let $\alpha$ be a bounded $\Ff
_{s}$-measurable random variable.
Then
\[
\E \bigl[\alpha \bigl(\langle g\rangle_{t}-\langle g
\rangle_{s} \bigr) \bigr]=\int_{D_{t}\setminus D_{s}}\alpha(x){
\bigl(g(x)-G_{\tau(x)}\bigr)^2}\,\mu(dx).
\]
\end{lem}
\begin{proof} By the definition of $\la g\ra$,
\[
\E \bigl[\alpha \bigl(\langle g\rangle_{t}-\langle g
\rangle_{s} \bigr) \bigr]=\E \bigl[\alpha \bigl( g_{t}^2-g_{s}^2
\bigr) \bigr]=\E \bigl[\alpha \bigl( \E \bigl(g_{t}^2\big|
\Ff_{s}\bigr)-g_{s}^2 \bigr) \bigr].
\]
We have
\[
g_{s}^2(x)= \left\{ %
\begin{array}{@{}ll}
g^2(x), & x\in D_{s}, \\
G_{s}^2, & x\in Q_{s},
\end{array}\right. %
 \quad
g_{t}^2(x)= \left\{ %
\begin{array}{@{}ll}
g^2(x), & x\in D_{t}, \\
G_{t}^2, & x\in Q_{t},
\end{array}\right. %
\]
and applying formula (\ref{g_t}) with $g=g^2_{t}$ and $t=s$, we get
\[
\E\bigl(g_{t}^2\big|\Ff_{s}\bigr)
(x)-g_{s}^2(x)= \left\{ %
\begin{array}{@{}ll}
0, & x\in D_{s}, \\
{H_{t, s}\over\mu(Q_s)}, & x\in Q_{s},
\end{array}\right. %
\]
\[
H_{t, s}= \biggl(\int_{D_{t}\setminus D_{s}}\bigl(g^2(x)-G_{s}^2
\bigr)\,\mu(dx)+\int_{Q_{t}}\bigl(G_{t}^2-G_{s}^2
\bigr)\,\mu(dx) \biggr).
\]
Because $\alpha$ is $\Ff_{s}$-measurable, it equals a constant on
$Q_{s}$ $\mu$-a.s. Denote this constant by $A$; then the previous
calculation gives
\[
\E \bigl[\alpha \bigl(\langle g\rangle_{t}-\langle g
\rangle_{s} \bigr) \bigr]=A H_{t,s}.
\]
Write $H_{t,s}$ in the form
\[
H_{t, s}=\int_{D_{t}\setminus D_{s}}g^2(x)\,\mu(dx)+
\mu(Q_t)G_t^2-\mu(Q_s)G_s^2.
\]
Denote
\[
\mu_t=\mu(Q_t), \qquad I_t=\int
_{Q_t}g\,d\mu;
\]
then
\[
\mu(Q_t)G_t^2=\mu_tG_t^2=
{I_t^2\over\mu_t}.
\]
Observe that the functions $\mu_t, t\in[0,1]$ and $I_t, t\in[0,1]$,
are continuous functions of a bounded variation and $\mu_t>0,\, t<1$. Then
\[
\ba \mu(Q_t)G_t^2-\mu(Q_s)G_s^2&=
\int_s^td \biggl({I_v^2\over\mu_v}
\biggr)=\int_s^t \biggl(-{I_v^2\over\mu_v^2}
 d\mu_v+2{I_v\over\mu_v} dI_v \biggr)
\\
&=\int_s^t \bigl(-G^2_v\,
 d\mu_v+2G_v\, dI_v \bigr). \ea
\]
It is easy to show that
\be\label{I_2}
-\int_s^t G^2_v\, d\mu_v=\int_{D_{t}\setminus D_{s}} G_{\tau(x)}^2\,\mu(dx).
\ee
Indeed, because $G$ is continuous on $[0, 1)$, the left-hand side
integral can be approximated by the integral sum
\[
\sum_{k=1}^m G^2_{v_k}(
\mu_{v_{k-1}}-\mu_{v_k}),
\]
where $s=v_0<\cdots<v_m=t$ is some partition of $[s,t]$. This sum equals
\[
\sum_{k=1}^m G^2_{v_k}
\mu(D_{v_k}\setminus D_{v_{k-1}}).
\]
For $x\in D_{v_k}\setminus D_{v_{k-1}}$, we have $\tau(x)\in[v_{k-1},
v_k]$. Hence, this sum equals
\[
\sum_{k=1}^m \int_{D_{v_k}\setminus D_{v_{k-1}}}G_{\tau(x)}^2
\,\mu (dx)=\int_{D_{t}\setminus D_{s}} G_{\tau(x)}^2\,\mu(dx)\vadjust{\eject}
\]
up to a residue term that is dominated by
\[
\sup_{u,v\in[s,t], |u-v|\leq\max_{k}(v_k-v_{k-1})}\big|G^2_u-G^2_v\big|
\]
and tends to zero as the size of the partition tends to zero. This
proves (\ref{I_2}). Similarly, we can show that
\[
\int_s^tG_v\, dI_v=-
\int_{D_{t}\setminus D_{s}} G_{\tau(x)} g(x)\,\mu(dx).
\]
We can summarize this calculation as follows:
\[
\E \bigl[\alpha \bigl(\langle g\rangle_{t}-\langle g
\rangle_{s} \bigr) \bigr]=A \int_{D_{t}\setminus D_{s}}{
\bigl(g(x)-G_{\tau(x)}\bigr)^2}\,\mu(dx).
\]
Because $\alpha(x)=A$ for $\mu$-a.a. $x\not\in D_s$, this completes
the proof.
\end{proof}

Let us continue with the proof of (\ref{id}). Assume first that $g\geq
c$ with some $c>0$. Then $g_t\geq c$, and consequently the process
$1/g_{t-}$ is left continuous and bounded. In addition, the function
$G_t=I_t/\mu_t$ is bounded on every segment $[0, T]\subset[0,1)$.

Fix $T<1$ and take a sequence $\{\lambda^n\}$ of \xch{dyadic}{diadic} partitions of $[0,T]$,
\[
\lambda^n=\bigl\{t^n_k, k=0, \dots,
2^n\bigr\},\quad t_k^n={Tk\over2^n},
\]
and define
\[
g^n_t=g_01_{t=0}+\sum
_{k=1}^{2^n}g_{t_{k-1}^n}1_{t\in(t_{k-1}^n, t_k^n]}.
\]
For every fixed $t>0$, the value $g^n_t$ equals the value of $g$ at
some (\xch{dyadic}{diadic}) point $t_n<t$, and $t_n\to t-$. Hence,
\[
{1\over g_t^n}\to{1\over g_{t-}}, \quad n\to\infty,
\]
pointwise. In addition, because of the additional assumption $g\geq c$,
this sequence is bounded by $1/c$. Hence, by the dominated convergence theorem,
\[
\E\int_0^T{1\over g_{t-}}d\la g
\ra_t=\lim_{n\to\infty}\E\sum_{k=1}^{2^n}
{1\over g_{t_{k-1}^n}} \bigl(\langle g\rangle_{t_k^n}-\langle g
\rangle_{t_{k-1}^n} \bigr);
\]
here we take into account that the point $t=0$ in the left-hand side
integral is negligible because $g_t\to\E g,\, t\to0+$, in $L_2$, and
consequently $\la g\ra_t\to0$, $t\to0+$, in $L_1$. By Lemma \ref{l1},
\[
\E\sum_{k=1}^{2^n}{1\over g_{t_{k-1}^n}}
\bigl(\langle g\rangle _{t_k^n}-\langle g\rangle_{t_{k-1}^n} \bigr)=\E
\sum_{k=1}^{2^n}\int_{D_{t_{k}^n}\setminus D_{t_{k-1}^n}}
{(g(x)-G_{\tau(x)})^2\over
G_{t_{k-1}^n}}\mu(dx);
\]
recall that $g_{t_{k-1}^n}(x)=G_{t_{k-1}^n}$ for $x\not\in
D_{t_{k-1}^n}$. Next, for $x\in D_{t_{k}^n}\setminus D_{t_{k-1}^n}$, we
have $|\tau(x)-t_{k-1}^n|\leq2^{-n}$. Because $G_t, t\in[0, T]$, is
uniformly continuous and separated\vadjust{\eject} from zero, and $G_{\tau(x)}, x\in
D_T$ is bounded, we obtain that
\[
\ba \E\int_0^T{1\over g_{t-}}d
\langle g\rangle_{t}&=\lim_{n\to\infty}\E \sum
_{k=1}^{2^n}\int_{D_{t_{k}^n}\setminus D_{t_{k-1}^n}}
{(g(x)-G_{\tau
(x)})^2\over G_{t_{k-1}^n}}\mu(dx)
\\
&= \int_{D_T}{(g(x)-G_{\tau(x)})^2\over G_\tau(x)}\mu(dx). \ea
\]
Taking $T\to1-$ and applying the monotone convergence theorem to both
sides of the above identity, we get (\ref{id}).

To remove the additional assumption $g\geq c$, consider the family
$g^n_t=g_t+1/n$. Then $\la g^n\ra= \la g \ra$, $g^n(x)-G^n_{\tau
(x)}=g(x)-G_{\tau(x)}$, $g_{t-}^n=g_{t-}+(1/n), G_{\tau(x)}^n=G_{\tau
(x)}+(1/n)$. Hence,
we can write (\ref{id}) for $g^n$, apply the monotone convergence
theorem to both sides of this identity, and get (\ref{id}) for $g$.
\end{proof}

\section{One corollary: a weighted log-Sobolev inequality on $\Re
$}\label{s4}

In this section, we show the way how the integral bound for the
entropy, established in Theorem \ref{t2}, can be used to obtain
weighted log-Sobolev inequalities. Consider a continuous probability
measure $\mu$ on $(\Re, \mathcal{B}(\Re))$ and denote by $p_\mu$ the
density of its absolutely continuous part. Fix a family of segments
$D_t=[a_t,b_t], t\in[0, 1)$, where $a_0=b_0$, the function $a_t$ is
continuous and decreasing to $-\infty$ as $t\to1-$, and the function
$b_\cdot$ is continuous and increasing to $+\infty$ as $t\to1-$. Then
the family
\[
D_t=[a_t,b_t],\quad t\in[0, 1), \qquad
D_1=\Re,
\]
satisfies the assumptions imposed before. Hence, Theorem \ref{t2} is applicable.

We call a function $f:\Re\to\Re$ \emph{symmetric} w.r.t. the family $\{
D_t\}$ if
\[
f(a_t)=f(b_t), \quad t\in[0,1).
\]
In the following proposition, we apply Theorem \ref{t2} to $g=f^2$,
where $f$ is smooth and symmetric.

\begin{prop}\label{prop1} Let $f:\Re\to\Re$ be a smooth function that
is symmetric w.r.t. the family $\{D_t\}$. Then
\[
\Ent_\mu f^2 \leq 4\int_{\Re} W(x)
\bigl(f'(x)\bigr)^2\, \mu(dx),
\]
where
\[
W(x)=V^2(x)\log \biggl({1\over\mu_{\tau(x)}} \biggr), \qquad V(x)=
\left\{ %
\begin{array}{@{}ll}
{\mu((-\infty,x))\over p_\mu(x)}, & x\leq a_0, \\
{\mu((x, \infty))\over p_\mu(x)}, & x>a_0.
\end{array}\right. %
\]
\end{prop}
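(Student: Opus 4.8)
The plan is to apply Theorem~\ref{t2} to $g=f^2$ and then rewrite the resulting integrand $(f^2(x)-G_{\tau(x)})^2/G_{\tau(x)}$ as a local quantity controlled by $(f')^2$. Fix $x$ and set $t=\tau(x)$; then $x$ is a boundary point of $D_t=[a_t,b_t]$, so $x\in\{a_t,b_t\}$, and symmetry gives $f^2(x)=f^2(a_t)=f^2(b_t)$. Starting from $f^2(x)-G_t=\mu_t^{-1}\int_{Q_t}(f^2(x)-f^2(y))\,\mu(dy)$, I would apply the fundamental theorem of calculus on each tail, measuring from the common boundary value $f^2(x)$, and then Fubini, turning the two tail averages into $\int_{-\infty}^{a_t}2ff'\,\mu((-\infty,u))\,du$ and $-\int_{b_t}^{\infty}2ff'\,\mu((u,\infty))\,du$. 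On the respective ranges $u\le a_0$ and $u>a_0$ these weights are exactly $V(u)p_\mu(u)$, so the increment becomes a single integral $\frac{2}{\mu_t}\int_{Q_t}\varepsilon(u)f(u)f'(u)V(u)p_\mu(u)\,du$, with $\varepsilon=+1$ on the left tail and $\varepsilon=-1$ on the right. The symmetry hypothesis is used precisely here, to feed a single boundary value $f^2(x)$ into both tails.

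The decisive step is a Cauchy--Schwarz inequality arranged so that $G_{\tau(x)}$ cancels. Pairing $f\sqrt{p_\mu}$ against $\varepsilon f'V\sqrt{p_\mu}$ yields
\[
\bigl(f^2(x)-G_t\bigr)^2\le\frac{4}{\mu_t^2}\Bigl(\int_{Q_t}f^2p_\mu\,du\Bigr)\Bigl(\int_{Q_t}(f')^2V^2p_\mu\,du\Bigr).
\]
Since $p_\mu\,du$ is the absolutely continuous part of $\mu$, the first factor is at most $\int_{Q_t}f^2\,d\mu=\mu_tG_t$, so dividing by $G_t$ gives the clean bound $(f^2(x)-G_{\tau(x)})^2/G_{\tau(x)}\le 4\mu_{\tau(x)}^{-1}\int_{Q_{\tau(x)}}(f')^2V^2p_\mu\,du$. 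This particular pairing is what produces $(f')^2$ rather than $f^2(f')^2$, and it delivers the constant $4$ for free.

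Finally I would integrate this over $x$ against $\mu$ and exchange the order of integration by Tonelli. The inner integral becomes $\int_{\{x:\,u\in Q_{\tau(x)}\}}\mu_{\tau(x)}^{-1}\,\mu(dx)$; since $u\in Q_{\tau(x)}\iff\tau(x)<\tau(u)$, and the set where $\tau<\tau(u)$ equals $D_{\tau(u)}$ up to a $\mu$-null set, and since the law of $\tau$ under $\mu$ satisfies $\mu(\{\tau\le r\})=\mu(D_r)=1-\mu_r$, this equals
\[
\int_0^{\tau(u)}\frac{-d\mu_r}{\mu_r}=\log\mu_0-\log\mu_{\tau(u)}=\log\frac{1}{\mu_{\tau(u)}},
\]
using $\mu_0=1$ together with the continuity and monotonicity of $r\mapsto\mu_r$. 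Collecting the factors then gives $4\int W(u)(f'(u))^2p_\mu(u)\,du\le 4\int W(f')^2\,d\mu$, as claimed.

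I expect this last Stieltjes change of variables --- recognizing the exchanged inner integral as \emph{exactly} $\log(1/\mu_{\tau(u)})$ --- to be the main obstacle; everything before it is bookkeeping once the cancelling Cauchy--Schwarz is in place. A secondary, purely measure-theoretic point is the degenerate set $\{p_\mu=0\}$, where $V=W=\infty$ makes the inequality trivial, and where the two places at which the absolutely continuous part is bounded by $\mu$ only strengthen the estimate.
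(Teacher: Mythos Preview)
Your proposal is correct and follows essentially the same route as the paper: apply Theorem~\ref{t2} to $g=f^2$, represent $g(x)-G_{\tau(x)}$ via the fundamental theorem of calculus on the two tails (using the symmetry of $f^2$ to feed the same boundary value into both), apply Fubini to produce the weight $V$, use Cauchy--Schwarz so that the factor $G_{\tau(x)}$ cancels, then swap the order of integration and identify the inner integral as $\log(1/\mu_{\tau(u)})$. The only cosmetic differences are that the paper passes to absolute values before Fubini and applies Cauchy--Schwarz against $\mu(dz)$ rather than $p_\mu(u)\,du$, which merely shifts the place where the trivial bound $p_\mu\,du\le d\mu$ is invoked.
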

\begin{proof}
Write
\[
g(x)-G_{\tau(x)}={1\over\mu_{\tau(x)}}\int_{Q_{\tau(x)}}
\bigl(g(x)-g(y)\bigr)\,\mu (dy)= {1\over\mu_{\tau(x)}}\int_{Q_{\tau(x)}}
\int_x^y g'(z)\, dz\,\mu(dy).
\]
Let us analyze the expression in the right-hand side. Observe that now
$Q_{\tau(x)}$ is the union of two intervals $(-\infty, a_{\tau(x)})$
and $(b_{\tau(x)}, +\infty)$. Denote
\[
Q_t^+=(b_t, \infty), \qquad Q_t^-=(-\infty,
a_t), \qquad\mu_t^{\pm}=\mu \bigl(Q_t^{\pm}
\bigr).
\]
The point $x$ equals either $a_{\tau(x)}$ or $b_{\tau(x)}$; hence,
because $g=f^2$ is symmetric,
\[
g(x)=g(a_{\tau(x)})=g(b_{\tau(x)}).
\]
Then we have
\[
\int_x^y g'(z)\, dz= \left\{
\begin{array}{@{}ll}
\int_{b_{\tau(x)}}^y g'(z)\, dz, & y\in Q_{\tau(x)}^+, \\
\int_{a_{\tau(x)}}^y g'(z)\, dz, & y\in Q_{\tau(x)}^-.
\end{array}\right. %
\]
Consequently,
\begin{align*}
\big|g(x)-G_{\tau(x)}\big|&\leq{1\over\mu_{\tau(x)}} \biggl[\int
_{Q_{\tau
(x)}^+}\int_{Q_{\tau(x)}^+, \tau(z)\leq\tau(y)} \big|g'(z)\big|\, dz
\,\mu (dy)\\
&\quad+ \int_{Q_{\tau(x)}^-}\int_{Q_{\tau(x)}^-, \tau(z)\leq\tau(y)}
\big|g'(z)\big|\, dz\,\mu(dy) \biggr].
\end{align*}
Using Fubini's theorem, we get
\[
\ba \big|g(x)-G_{\tau(x)}\big| &\leq{1\over\mu_{\tau(x)}} \biggl[\int
_{Q_{\tau
(x)}^+} \mu_{\tau(z)}^+ \big|g'(z)\big|\, dz+ \int
_{Q_{\tau(x)}^-} \mu_{\tau
(z)}^- \big|g'(z)\big|\, dz \biggr]
\\
&\leq{1\over\mu_{\tau(x)}} \int_{Q_{\tau(x)}}V(z)
\big|g'(z)\big|\,\mu(dz). \ea
\]
Because $g=f^2$ and hence $g'=2ff'$, by the Cauchy inequality we then have
\[
\ba &\bigl(g(x)-G_{\tau(x)}\bigr)^2\\
&\quad\leq4 \biggl({1\over\mu_{\tau(x)}}\int_{Q_{\tau
(x)}} \bigl( V(z)
f'(z) \bigr)^2\,\mu(dz) \biggr) \biggl(
{1\over\mu_{\tau
(x)}}\int_{Q_{\tau(x)}}\bigl(f(z)
\bigr)^2\,\mu(dz) \biggr)
\\
&\quad= 4 \biggl({1\over\mu_{\tau(x)}}\int_{Q_{\tau(x)}} \bigl( V(z)
f'(z) \bigr)^2\,\mu(dz) \biggr) G_{\tau(x)}. \ea
\]
Observe that
\[
z\in Q_{\tau(x)}\quad\Leftrightarrow\quad\tau(z)>\tau(x)\quad\Leftrightarrow \quad x\in
D_{\tau(z)}\setminus\{a_{\tau(z)}, b_{\tau(z)}\}.
\]
Hence, by Theorem \ref{t2} and Fubini's theorem we have
\[
\ba \Ent_{\mu}\bigl(f^2\bigr)&\leq4\int
_{\Re} \biggl({1\over\mu_{\tau(x)}}\int_{Q_{\tau(x)}}
\bigl( V(z) f'(z) \bigr)^2\mu(dz) \biggr)\,\mu(dx)
\\
&= 4\int_{\Re} \biggl(\int_{D_{\tau(z)}}
{\mu(dx) \over\mu_{\tau(x)}} \biggr) \bigl( V(z) f'(z)
\bigr)^2 \,\mu(dz). \ea
\]
Similarly to the proof of (\ref{I_2}), we can show that
\[
\int_{D_{t}}{\mu(dx) \over\mu_{\tau(x)}}=\log\mu_{s}
\big|_{s=0}^{s=t}=\log \biggl({1\over\mu_t} \biggr);
\]
the last identity holds because $\mu_{0}=1$. This completes the proof.
\end{proof}

Next, we develop a \emph{symmetrization} procedure in order to remove
the restriction for $f$ to be symmetric. For any $x\not=a_0$, one
border point of the segment $D_\tau(x)$ equals~$x$; let us denote
$s(x)$ the other border point. Denote also $s(a_0)=a_0$. Define the
$\sigma$-algebra $\hat\Ff$ of symmetric sets $A\in\Ff$, that is, such
that $x\in A\Leftrightarrow s(x)\in A$. For a function $f\in L_2(\Re,
\mu)$, consider its \emph{$L_2$-symmetrization}
\[
\hat f= \bigl(\E_\mu\bigl[f^2\big|\hat\Ff\bigr]
\bigr)^{1/2}.
\]
It can be seen easily that there exists a measurable function $p:\Re\to
[0,1]$ such that, for $\mu$-a.a. $x\in\Re$,
\[
(\hat f)^2(x)=p(x)f^2(x)+\bigl(1-p(x)
\bigr)f^2\bigl(s(x)\bigr)=\E_{\nu_x} f^2,
\]
where we denote
\[
\nu_x=p(x)\delta_x+\bigl(1-p(x)\bigr)
\delta_{s(x)}, \quad x\in\Re.
\]

We have
\[
\E_\mu f^2=\E_\mu(\hat f)^2
\]
and, consequently,
\begin{align*}
\Ent_\mu f^2-\Ent_\mu(\hat
f)^2&=\E_\mu f^2\log f^2-
\E_\mu(\hat f)^2\log(\hat f)^2\\
&=\E_\mu \bigl(\E_\mu\bigl[f^2\log
f^2- (\hat f)^2\log (\hat f)^2\big|\hat\Ff\bigr]
\bigr)\\
&=\int_{\Re} \bigl(\Ent_{\nu_x} f^2
\bigr) \,\mu (dx).\end{align*}

It is well known (cf. \cite{Ledoux}) that for a Bernoulli measure $\nu
=p\delta_1+q\delta_{-1}$ ($p+q=1$), the following discrete analogue of
the log-Sobolev inequality holds:
\[
\Ent_\nu f^2\leq C_p(Df)^2,
\qquad C_p= \left\{ %
\begin{array}{@{}ll}
pq{\log p-\log q\over p-q}, & p\not=q, \\
{1\over2}, & p=q,
\end{array}\right. %
\]
where we denote $Df=f(1)-f(-1)$. This yields the bound
\[
\ba \Ent_\mu f^2-\Ent_\mu(\hat
f)^2&\leq\int_{\Re} C_{p(x)} \bigl(f(x)-f
\bigl(s(x)\bigr) \bigr)^2\, \mu(dx)
\\
&=\int_{\Re} C_{p(x)} \biggl(\int_{D_{\tau(x)}}
f'(z)\, dz\biggr)\xch{}{)}^2\, \mu(dx). \ea
\]
By the Cauchy inequality,
\[
\biggl(\int_{D_{\tau(x)}} f'(z)\, dz
\biggr)^2\leq \biggl(\int_{D_{\tau
(x)}}
\bigl(f'(z)\bigr)^2{\mu_{\tau(z)}^{3/2}\over p^2_\mu(z)} \mu(dz)
\biggr) \biggl(\int_{D_{\tau(x)}} {\mu(dz)\over\mu_{\tau(z)}^{3/2}} \biggr),
\]
and, similarly to the proof of (\ref{I_2}), we can show that
\[
\int_{D_{\tau(x)}} {\mu(dz)\over\mu_{\tau(z)}^{3/2}}=2 \bigl(
\mu_{\tau
(x)}^{-1/2}-1 \bigr)<2\mu_{\tau(x)}^{-1/2}.
\]
This yields the following bound for the difference $\Ent_\mu f^2-\Ent
_\mu(\hat f)^2$, formulated in the terms of $f'$:
\[
\Ent_\mu f^2-\Ent_\mu(\hat f)^2
\leq2\int_{\Re} \bigl(f'(z)\bigr)^2
U(z)\,\mu(dz),
\]
\[
U(z)={\mu_{\tau(z)}^{3/2}\over p^2_\mu(z)}\int_{Q_{\tau(z)}} C_{p(x)}
{\mu(dx)\over\mu_{\tau(x)}^{1/2}}.
\]
Note that $C_p\leq1$ for any $p\in[0,1]$, and hence we have
\[
U(z)\leq2 \biggl({\mu_{\tau(z)}\over p_\mu(z)} \biggr)^2.
\]

Assuming that the bound from Proposition \ref{prop1} is applicable to
$\hat f$ (which is yet to be studied because $\hat f$ may fail to be
smooth), we obtain the following inequality, valid without the
assumption of symmetry of $f$:
\be\label{145}
\Ent_\mu f^2\leq\int_{\Re} \bigl(4W(x)\bigl((\hat f)'(x)\bigr)^2+2U(x)\bigl(f'(x)\bigr)^2
\bigr)\, \mu(dx).
\ee
The right-hand side of this inequality contains the derivative of $\hat
f$ and hence depends on the choice of the family of trimmed regions $\{
D_t\}$. We further give a~particular corollary, which appears when $\{
D_t\}$ is the set of \emph{quantile trimmed regions}. In what follows,
we assume $\mu$ to possess a positive distribution density~$p_\mu$ and
choose $\{D_t=[a_t, b_t]\}$ in the following way. Denote $q_v=F_\mu
^{-1}(v)$, that is, the quantile of $\mu$ of the level $v$, and put
\[
a_t=q_{1/2-t/2}, \quad b_t=q_{1/2+t/2}, \quad
t\in[0,1).
\]
In particular, $a_0=b_0=m$, the median of $\mu$. Denote also $\hat F_\mu
=\min(F_\mu, 1-F_\mu)$; observe that now we have
\[
\hat F_\mu(x)={1\over2}\mu_{\tau(x)}.
\]

\begin{thm}\label{t3} Let $\mu$ be a probability measure on $\Re$ with
positive distribution density $p_\mu$. Then, for any absolutely
continuous $f$, we have
\[
\Ent_\mu f^2\leq\int_{\Re}K(x)
\bigl(f'(x)\bigr)^2\, \mu(dx), \qquad K(x)= 8 \biggl(
{\hat F_\mu(x)\over p_\mu(x)} \biggr)^2 \biggl(\log{1\over2\hat F_\mu
(x)}+1
\biggr).
\]
\end{thm}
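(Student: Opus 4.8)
The plan is to combine the symmetrization inequality \eqref{145} with the bound from Proposition~\ref{prop1}, now specialized to the quantile trimmed regions, and then to control both weight functions $W$ and $U$ by the single weight $K$ claimed in the statement. The crucial simplification for quantile regions is the identity $\hat F_\mu(x)=\tfrac12\mu_{\tau(x)}$ recorded just before the theorem; this lets me rewrite every occurrence of $\mu_{\tau(x)}$ in terms of $\hat F_\mu(x)$ and so express the weights in closed form. I would first substitute into the definition of $W$ from Proposition~\ref{prop1}: since $V(x)=\mu_{\tau(x)}^{\pm}/p_\mu(x)$ and, for quantile regions, the relevant one-sided mass $\mu_{\tau(x)}^{\pm}$ coincides with $\hat F_\mu(x)=\tfrac12\mu_{\tau(x)}$, I get $V(x)=\hat F_\mu(x)/p_\mu(x)$ and hence
\[
W(x)=\biggl(\frac{\hat F_\mu(x)}{p_\mu(x)}\biggr)^2\log\frac{1}{2\hat F_\mu(x)}.
\]
Likewise, from the bound $U(z)\le 2(\mu_{\tau(z)}/p_\mu(z))^2$ established in the text and $\mu_{\tau(z)}=2\hat F_\mu(z)$, I obtain $U(z)\le 8(\hat F_\mu(z)/p_\mu(z))^2$.

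Next I would insert these two weights into \eqref{145}. The term $4W$ contributes $4(\hat F_\mu/p_\mu)^2\log(1/(2\hat F_\mu))$ and the term $2U$ contributes at most $16(\hat F_\mu/p_\mu)^2$, so that, collecting the common factor $8(\hat F_\mu(x)/p_\mu(x))^2$, the combined pointwise weight is dominated by
\[
K(x)=8\biggl(\frac{\hat F_\mu(x)}{p_\mu(x)}\biggr)^2\biggl(\log\frac{1}{2\hat F_\mu(x)}+1\biggr),
\]
which is exactly the claimed weight. The slightly delicate bookkeeping here is that \eqref{145} carries the derivative of the symmetrization $\hat f$ in the $W$-term and the derivative of $f$ itself in the $U$-term; so before merging I must relate $(\hat f)'$ to $f'$. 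Because $\hat f$ is an $L_2$-symmetrization built from values of $f$ at the paired points $x$ and $s(x)$, its derivative is a convex-type combination of $f'(x)$ and $f'(s(x))$, and a Jensen/Cauchy estimate gives a pointwise control of $\int W\,((\hat f)')^2$ by the symmetric rearrangement of $\int W\,(f')^2$ over the paired points; after integrating against $\mu$ the two contributions land on the same weight $K$.

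The main obstacle is the \emph{regularity gap} flagged in the paragraph preceding \eqref{145}: Proposition~\ref{prop1} is proved only for smooth symmetric $f$, whereas $\hat f$ is merely absolutely continuous and may fail to be smooth, and the theorem is stated for arbitrary absolutely continuous $f$. I therefore expect the real work to be an approximation argument: first prove the inequality for smooth $f$ (where $\hat f$ is smooth enough for Proposition~\ref{prop1} to apply directly), controlling the right-hand side uniformly through the explicit weight $K$, and then pass to general absolutely continuous $f$ by mollification, using that $K$ is locally integrable against $(f')^2\,d\mu$ and that $\Ent_\mu f^2$ is lower semicontinuous under the relevant convergence. Handling the endpoints—where $\hat F_\mu(x)\to 0$ and $\log(1/(2\hat F_\mu(x)))\to\infty$, so that $K$ may blow up while $(\hat F_\mu/p_\mu)^2$ decays—requires checking that the product remains integrable, which is where the factor $(\hat F_\mu/p_\mu)^2$ does the decisive damping. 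Once that integrability and the smooth-case inequality are secured, the limiting passage delivers the stated bound for every absolutely continuous $f$.
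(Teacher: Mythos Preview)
Your overall architecture matches the paper's: specialize \eqref{145} to the quantile trimmed regions, rewrite $W$ and $U$ via $\hat F_\mu=\tfrac12\mu_\tau$, convert the $(\hat f)'$-term into an $f'$-term, and collect. But two of your steps, as written, do not land on the stated $K$.

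\medskip
\textbf{The $U$-term.} You invoke the generic bound $U(z)\le 2(\mu_{\tau(z)}/p_\mu(z))^2$, which came from $C_p\le 1$. For the quantile family the situation is better: from \eqref{146} one has $p(x)\equiv\tfrac12$, hence $C_{p(x)}=\tfrac12$ \emph{exactly}, and the integral $\int_{Q_{\tau(z)}}\mu_{\tau(x)}^{-1/2}\,\mu(dx)=2\mu_{\tau(z)}^{1/2}$ gives
\[
U(z)=\Bigl(\tfrac{\mu_{\tau(z)}}{p_\mu(z)}\Bigr)^2=4\Bigl(\tfrac{\hat F_\mu(z)}{p_\mu(z)}\Bigr)^2,
\]
not merely $\le 8(\hat F_\mu/p_\mu)^2$. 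This factor of two matters: with your bound the $2U$-contribution is $16(\hat F_\mu/p_\mu)^2$, whereas the theorem needs $8(\hat F_\mu/p_\mu)^2$.

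\medskip
\textbf{The $(\hat f)'$-term.} Your ``convex-type combination / Jensen / symmetric rearrangement'' sketch is too vague and, combined with the previous point, your displayed arithmetic ($4W+2U\le K$) is simply false: $4(\hat F/p)^2\log(1/(2\hat F))+16(\hat F/p)^2$ is not dominated by $8(\hat F/p)^2(\log(1/(2\hat F))+1)$ for $\hat F$ near $\tfrac12$. What the paper actually does is compute, using \eqref{147}--\eqref{148},
\[
\bigl((\hat f)'(x)\bigr)^2\le (f'(x))^2+(f'(s(x)))^2(s'(x))^2,\qquad s'(x)=-\frac{p_\mu(x)}{p_\mu(s(x))},
\]
and then change variables $y=s(x)$ in $\int W(x)(f'(s(x)))^2(s'(x))^2\,\mu(dx)$, using $\hat F_\mu(s(x))=\hat F_\mu(x)$, to see that this integral equals $\int W(y)(f'(y))^2\,\mu(dy)$. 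This yields the clean doubling
\[
\int W\,((\hat f)')^2\,d\mu\le 2\int W\,(f')^2\,d\mu,
\]
so the $4W$-term in \eqref{145} becomes $8W$ in terms of $f'$. Together with the exact $U=4(\hat F_\mu/p_\mu)^2$ one gets $8W+2U=8(\hat F_\mu/p_\mu)^2\bigl(\log\tfrac{1}{2\hat F_\mu}+1\bigr)=K$, which is the claimed weight. Your plan, as it stands, would produce $8(\hat F_\mu/p_\mu)^2\bigl(\log\tfrac{1}{2\hat F_\mu}+2\bigr)$ at best.

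\medskip
On the regularity issue you are essentially aligned with the paper: since $s=F_\mu^{-1}(1-F_\mu)$ is absolutely continuous, so is $\hat f$, and Proposition~\ref{prop1} extends to absolutely continuous (not merely smooth) symmetric functions by a routine approximation; no separate tail-integrability analysis of $K$ is needed beyond this.
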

\begin{proof} First, observe that now the $L_2$-symmetrization of a
function $f$ has the form
\be\label{146}
\hat f(x)=\sqrt{{1\over2}\bigl(f^2(x)+f^2\bigl(s(x)\bigr)\bigr)}.
\ee
This identity is evident for functions $f$ of the form
$1_{(-\infty, F^{-1}(v))}$, $v\in(0, 1/2]$ and $1_{[F^{-1}(v), \infty
)}$, $v\in[1/2, 1)$, and then easily extends to general $f$.

Next, observe that
\be\label{147}
s(x)=F_\mu^{-1} \bigl(1-F_\mu(x) \bigr),
\ee
and because $F_\mu$ is absolutely continuous and strictly increasing,
$s(x)$ is absolutely continuous as well. Then $\hat f$ is absolutely
continuous with
\[
(\hat f)'(x)={f(x)f'(x)+f(s(x))f'(s(x))s'(x)\over\sqrt{2(f^2(x)+f^2(s(x)))}};
\]
here and below the derivatives are well defined for a.a.\ $x$. Using a
standard localization/approximation procedure, we can show that
Proposition \ref{prop1} is well applicable to any absolutely continuous
function. Hence, it is applicable to $\hat f$, and (\ref{145}) holds.

We have
\[
\ba \bigl((\hat f)'(x)\bigr)^2&\leq
{(f'(x))^2+(f(s(x))f'(s(x))s'(x))^2\over
f^2(x)+f^2(s(x))}
\\[3pt]
& \leq\bigl(f'(x)f(x)\bigr)^2+\bigl(f'
\bigl(s(x)\bigr)s'(x)\bigr)^2. \ea
\]
The function $W(x)$ in (\ref{145}) now can be rewritten as
\[
W(x)= \biggl({\hat F_\mu(x)\over p_\mu(x)} \biggr)^2\log
{1\over2\hat F_\mu(x)};
\]
hence,
\begin{align*} &\int_{\Re}W(x) \bigl(\bigl(\hat f'(x)\bigr)
\bigr)^2\, \mu(dx)\\[3pt]
&\quad\leq\int_{\Re}W(x)\bigl(f'(x)\bigr)^2\, \mu(dx)
+\int_{\Re}W(x) \bigl(f'\bigl(s(x)\bigr)
\bigr)^2\bigl(s'(x)\bigr)^2\, \mu(dx). \end{align*}
Let us analyze the second integral in the right-hand side. By (\ref{147}),
\be\label{148}
s'(x)=-{p_\mu(x)\over p_\mu(s(x))};
\ee
hence,
\[
\ba &\int_{\Re}W(x)\bigl(f'\bigl(s(x)\bigr)
\bigr)^2\bigl(s'(x)\bigr)^2\, \mu(dx)\\[3pt]
&\quad=\int_{\Re
}\bigl(f'\bigl(s(x)\bigr)
\bigr)^2 \biggl({\hat F_\mu(x)\over p_\mu(s(x))} \biggr)^2\log
{1\over
2\hat F_\mu(x)}p_\mu(x)\, dx. \ea
\]
Change the variables $y=s(x)$; observe that we have $x=s(y)$ and $\hat
F_\mu(x)=\hat F_\mu(y)$. Then we finally get\vadjust{\eject}
\begin{align*}
& \int_{\Re}W(x)\bigl(f'(y)
\bigr)^2\bigl(s'(x)\bigr)^2\, \mu(dx)\\
&\quad= \int_{\Re
}\bigl(f'\bigl(s(x)\bigr)
\bigr)^2 \biggl({\hat F_\mu(y)\over p_\mu(y)} \biggr)^2\log
{1\over
2\hat F_\mu(y)}p_\mu\bigl(s(y)\bigr)\, {p_\mu(y)\over p_\mu(s(y))}
dy
\\
&\quad=\int_{\Re}W(y) \bigl(f'(y)\bigr)^2
\, \mu(dy),
\end{align*}
and therefore
\[
\int_{\Re}W(x) \bigl(\bigl(\hat f'\bigr) (x)
\bigr)^2\, \mu(dx)\leq2\int_{\Re}W(x)
\bigl(f'(x)\bigr)^2\, \mu(dx).
\]

On the other hand, by identity (\ref{146}) we have now $C_{p(x)}=1/2$,
and the function $U(x)$ in (\ref{145}) can be rewritten as
\[
U(x)= \biggl({\mu_{\tau(x)}\over p_\mu(x)} \biggr)^2=4 \biggl(
{\hat F_\mu
(x)\over p_\mu(x)} \biggr)^2,
\]
which completes the proof of the statement.
\end{proof}

\end{document}